\newcommand{\projective}{\newlength{\projectiveheight}%
\settoheight{\projectiveheight}{$\scriptstyle\wedge$}%
$(\overline{\raisebox{0mm}[0.55\projectiveheight][0mm]{$\scriptstyle\wedge$}})$}
\DeclareMathOperator\dis{\triangle}%
\DeclareMathOperator\E{E}%
\DeclareMathOperator\GL{GL}%
\DeclareMathOperator\GE{GE}%
\DeclareMathOperator\Harm{H}%
\DeclareMathOperator\Seq{\mathcal S}
\newcommand{\Trans}{\mathrm T}
\newcommand{\bP}{{\mathbb P}}
\newcommand{\XX}{\phantom{-}}
\newtheorem{thm}{Theorem}
\newtheorem{lem}{Lemma}
{\theoremstyle{definition}
\newtheorem{exa}{Example}
}
\begin{document}

\title{Von Staudt's theorem revisited}

\author{Hans Havlicek}

\maketitle

\begin{abstract}
We establish a version of von Staudt's theorem on mappings which preserve
harmonic quadruples for projective lines over (not necessarily commutative)
rings with ``sufficiently many'' units, in particular $2$ has to be a unit.

\par~\par\noindent
\emph{Mathematics Subject Classification (2010):} 51A10 51C05 17C50\\
\emph{Key words: harmonic quadruple, harmonicity preserver, projective line
over a ring, Jordan homomorphism}
\end{abstract}

\section{Introduction}\label{se:intro}

The first edition of the seminal book \emph{Geometrie der Lage} by Karl Georg
Christian von Staudt appeared in 1847; see \cite{hart-08a} for publication
details. Projectivities are defined there by the invariance of harmonic
quadruples \cite[p.~49]{stau-47}: \emph{''Zwei einf\"{o}rmige Grundgebilde heissen
zu einander projektivisch {\projective}, wenn sie so auf einander bezogen sind,
dass jedem harmonischen Gebilde in dem einen ein harmonisches Gebilde im andern
entspricht.''} Next, after defining perspectivities, the following theorem is
established: Any projectivity is a finite composition of perspectivities and
vice versa. (It was noticed later that there is a small gap in von Staudt's
reasoning. A detailed exposition can be found in \cite{voel-08a}.) Any result
in this spirit now is called a \emph{von Staudt's theorem}.
\par
In the present article we shall be concerned with \emph{projective lines over
rings} (associative with a unit element) and the algebraic description of their
\emph{harmonicity preservers}, i.~e., mappings which take all harmonic
quadruples of a first projective line to harmonic quadruples of a second one.
There is a widespread literature on this topic. The following short review is
rather sketchy, as it does not fully reflect the varying (often rather
technical) assumptions on the underlying rings. Part of the presented material
is related with mappings which reappear in a more general setting in the
surveys \cite{brehm-08a} and \cite{veld-95a}.
\par
All harmonicity preserving bijections of the projective line over any
\emph{commutative field} $F$ of characteristic $\neq 2$ onto itself were
determined by O.~Schreier and E.~Sperner \cite[p.~191]{schrei+s-35a}. In terms
of an underlying $F$-vector space $V$ these transformations comprise precisely
the \emph{projective semilinear group} $\mathrm{P}\Gamma\mathrm{L}(V)$. The
case of a (not necessarily commutative) \emph{field} of characteristic $\neq 2$
was settled in several steps by G.~Ancochea \cite{anco-41a}, \cite{anco-42a},
\cite{anco-47a} and L.-K.~Hua \cite{hua-49a} (see also \cite{hua-52a}). For a
proper skew field $F$ one has to include mappings which arise from
\emph{antiautomorphisms} of $F$ (provided that $F$ admits any
antiautomorphism). A.~J.~Hoffman \cite{hoff-51a} ($F$ commutative) and R.~Baer
\cite[p.~78]{baer-52a} ($F$ arbitrary) proved that similar results hold if the
invariance of harmonic quadruples is replaced by the invariance of an arbitrary
cross ratio $k\neq 0,1$ in the centre of $F$. In this way the case of
characteristic $2$ need no longer be excluded. A detailed account with
historical remarks is given in \cite[pp.~56--57]{karz+k-88a}.
\par
There are several outcomes for the projective line over a \emph{ring $R$ with
stable rank $2$}: Loosely speaking, in the case of a \emph{commutative ring}
$R$ the result of Schreier and Sperner remains unaltered provided that $R$
contains ``sufficiently many'' units, in particular $2$ has to be a unit in
$R$. Contributions (under varying additional assumptions) are due to W.~Benz
\cite{benz-64a}, \cite[pp.~173--183]{benz-73a}, B.~V.~Limaye and N.~B.~Limaye
\cite{lima+l-77b}, N.~B. Limaye \cite{lima-71a}, \cite{lima-72a},
B.~R.~McDonald \cite{mcdon-81a}, and H.~Schaef\-fer \cite{schae-74a}. Little
seems to be known for non-commutative rings: B.~V.~Limaye and N.~B.~Limaye
(\cite{lima+l-77a}, \cite{lima+l-77c}) treated the case of a (not necessarily
commutative) \emph{local ring} $R$. They determined all bijections of the
projective line over $R$ such that all quadruples with a given cross ratio $k$
go over to quadruples with a given cross ratio $k'$, where $k, k'$ are elements
in the centre of $R$ other that $0,1$. Here the algebraic description is more
involved, since one has to use \emph{Jordan automorphisms} (or, in a different
terminology, \emph{semiautomorphisms}) of $R$. More information can be
retrieved from the surveys in \cite{bart+b-85}, \cite{benz+l+s-72a}, and
\cite{benz+s+s-81}.
\par
F.~Buekenhout \cite{buek-65a}, St.~P.~Cojan \cite{cojan-85a}, D.~G.~James
\cite{james-82a}, and B.~Klotzek \cite{klotz-88a} characterised those (not
necessarily injective) mappings between projective lines over fields which
satisfy a much weaker form of cross ratio preservation than the one mentioned
in the preceding paragraph. The link with ring geometry is achieved via a
recoordinatisation of the domain projective line in terms of a \emph{valuation
ring} \cite{james-82a}.
\par
It was pointed out by C.~Bartolone and F.~Di~Franco \cite{bart+f-79} that an
algebraic description of all harmonicity preserving bijections of the
projective line over an arbitrary ring is out of reach, even in the commutative
case. They therefore initiated the study of mappings which preserve
\emph{generalised harmonic quadruples} and succeeded in describing all such
mappings for commutative rings; see also M.~Kulkarni \cite{kulk-80a}. However,
this goes beyond the scope of the present article. With regard to the
non-commutative case, we refer to the work of C.~Bartolone and F.~Bartolozzi
\cite{bart+b-85}, D.~Chkhatarashvili \cite{chkh-98a}, L.~Cirlincione and
M.~Enea \cite{cirl+e-90}, and A.~A.~Lashkhi \cite{lash-89a}, \cite{lash-90a},
\cite{lash-97a}. Take notice that some of the quoted papers are merely short
communications without any proof. For harmonicity preserving mappings of other
geometric structures see \cite{bert-03a}, \cite{blunck-91a}, \cite{ferr-81a},
and the references therein. It is also worth noting that the invariance of
harmonic quadruples appears together with other conditions in an early paper
\cite{hua-45a} of L.-K.~Hua on a characterisation of certain transformations of
matrix spaces. However, as Hua pointed out in a subsequent note \cite{hua-45b},
the condition about harmonic quadruples is superfluous in that context, and it
afterwards disappeared from the so-called \emph{geometry of matrices}; cf.\ the
monographs \cite{huanglp-06a} and \cite{wan-96a}. An analogous result for
projective lines over certain semisimple rings is due to A.~Blunck and the
author \cite{blunck+h-05b}.
\par
The present article is organised as follows: In Section \ref{se:basic} we
collect the relevant notions and we recall the definition of harmonicity
preservers which arise from Jordan homomorphisms. Our main result is
Theorem~\ref{thm:staudt} in Section~\ref{se:staudt}. It shows that under
certain conditions there are no other harmonicity preservers between projective
lines over rings, but those which arise from Jordan homomorphisms. A major tool
in our proof is a lemma from \cite{lima+l-77b} which characterises Jordan
homomorphisms.

\section{Basic notions and examples}\label{se:basic}

All our rings are associative with a unit element $1$ which is inherited by
subrings and acts unitally on modules. The trivial case $1=0$ is excluded. The
group of units (invertible elements) of a ring $R$, say, will be denoted by
$R^*$.
\par
Let $R$ be a ring and let $M$ be a free left $R$-module of rank $2$. We say
that $a\in M$ is \emph{admissible} if there exists $b\in M$ such that $(a,b)$
is a basis of $M$ (with two elements). As a matter of fact, we do not require
that all bases of $M$ have the same number of elements; cf.\
\cite[p.~3]{lam-99a}.
\par
The following exposition is mainly taken from \cite[p.~785]{herz-95a}; see also
\cite[pp.~15--16]{blunck+he-05a} or \cite[pp.~899--904]{havl-12b}: The
\emph{projective line} over $M$ is the set $\bP(M)$ of all cyclic submodules
$Ra$, where $a\in M$ is admissible. The elements of $\bP(M)$ are called
\emph{points}. At times it will be convenient to use coordinates with respect
to some basis $(e_0,e_1)$ of $M$. Given any pair $(a,b)\in M^2$ let $(x_0,x_1)$
and $(y_0,y_1)$ be the coordinates of $a$ and $b$, respectively. The matrix
\begin{equation}\label{eq:matrix}
    \begin{pmatrix}x_0&x_1\\y_0&y_1\end{pmatrix}
\end{equation}
will be called the \emph{matrix of $(a,b)$ w.~r.~t.\ the basis $(e_0,e_1)$}.
The pair $(a,b)$ is a basis of $M$ if, and only if, the matrix in
(\ref{eq:matrix}) is invertible. Thus $(x_0,x_1)\in R^2$ is admissible (or,
said differently, a coordinate pair of a point) precisely when it is the first
(or second) row of a matrix in $\GL_2(R)$. One particular case deserves
explicit mention, since it links the group $R^*$ with the group $\GL_2(R)$: For
all $x,y\in R$ holds
\begin{equation}\label{eq:GL-R*}
    \begin{pmatrix}x&1\\y&1\end{pmatrix}\in\GL_2(R)
    \quad\mbox{if, and only if,}\quad
    x-y\in R^* .
\end{equation}
This is immediate from
\begin{equation}
    \begin{pmatrix}1&1\\0&1\end{pmatrix}
    \begin{pmatrix}x-y&0\\0&1\end{pmatrix}
    \begin{pmatrix}1&0\\y&1\end{pmatrix} =
    \begin{pmatrix}x&1\\y&1\end{pmatrix}.
\end{equation}
\par
By definition, each point $p\in\bP(M)$ has an admissible generator, say $a$. If
there exist $x,y\in R$ with $xy=1$ and $yx\neq 1$ then $ya$ is a non-admissible
generator of $p$, whereas $xa$ is an admissible generator of a point other than
$p$ \cite[Prop.~2.1 and Prop.~2.2]{blunck+h-00b}. We adopt from now on the
following convention: \emph{We only use admissible generators of points.} Two
admissible elements of $M$ generate the same point precisely when they are
left-proportional by a unit in $R$.
\par
Two points $p$ and $q$ are called \emph{distant}, in symbols $p \dis q$, if
$M=p\oplus q$. For all $a,b\in M$ holds $Ra \dis Rb$ precisely when the
coordinate matrix of $(a,b)$ w.~r.~t.\ any basis $(e_0,e_1)$ of $M$ is
invertible. The graph of the relation $\dis$, i.~e. the pair
$\big(\bP(M),{\dis}\big)$, is called the \emph{distant graph\/} of $\bP(M)$. It
is an undirected graph without loops, and it need not be connected. In order to
describe the \emph{connected components} of the distant graph we need some
prerequisites.
\par
The \emph{elementary linear group\/} $\E_2(R)$ is generated by the set of all
matrices
\begin{equation*}\label{eq:E(t)}
  E(t):=\begin{pmatrix}
  \XX t&1\\-1&0
  \end{pmatrix}
  \quad\mbox{with}\quad  t\in R ;
\end{equation*}
see \cite[p.~5]{cohn-66a}. Let $\Seq(R)$ be the set of all finite sequences in
$R$ (including the empty sequence). We adopt the shorthand notation
\begin{equation*}\label{eq:E(T)}
    E(T):= E(t_1)\cdot E(t_2)\cdots E(t_n)
    \quad\mbox{where}\quad T=(t_1,t_2,\ldots,t_n)\in\Seq(R).
\end{equation*}
(Note that $n\geq 0$, the length of $T$, is arbitrary.) From
$E(t)^{-1}=E(0,-t,0)$ follows that all matrices $E(T)$ with $T\in\Seq(R)$
comprise the entire group $\E_2(R)$. The subgroup of $\GL_2(R)$, which is
generated by $\E_2(R)$ and the set of all invertible diagonal matrices, is
denoted by $\GE_2(R)$. By definition, a \emph{$\GE_2$-ring\/} $R$ is
characterised by $\GL_2(R)=\GE_2(R)$.
\par
If $(e_0,e_1)$ is a basis of $M$ then the connected component of the point
$Re_0\in\bP(M)$ is given by the set of all points $p=R(x_0e_0+x_1e_1)$, where
$(x_0,x_1)$ is the first row of some matrix $E(T)$ with $T\in\Seq(R)$ or, said
differently,
\begin{equation}\label{eq:component}
    (x_0,x_1)=(1,0)\cdot E(T) \quad\mbox{for some}\quad  T\in\Seq(R).
\end{equation}
Furthermore, the distant graph $\big(\bP(M),{\dis}\big)$ is connected precisely
when $R$ is a $\GE_2$-ring \cite[Thm.~3.2]{blunck+h-01a}.
\par
A quadruple $(p_0,p_1,p_2,p_3)\in\bP(M)^4$ is \emph{harmonic} if its cross
ratio \cite[p.~787]{herz-95a} equals $-1\in R$, i.~e., there exists a basis
$(g_0,g_1)$ of $M$ such that
\begin{equation}\label{eq:H.g}
    p_0=R g_0,\quad p_1=R g_1,\quad  p_2=R(g_0+g_1),\quad p_3=R(g_0-g_1).
\end{equation}
In this case we write $\Harm(p_0,p_1,p_2,p_3)$. In terms of coordinates
w.~r.~t.\ some basis $(e_0,e_1)$ of $M$ there is an alternative description:
$\Harm(p_0,p_1,p_2,p_3)$ holds if, and only if, there is a matrix
$G\in\GL_2(R)$ such that
\begin{equation}\label{eq:H.G}
    (1,0)\cdot G,\quad
    (1,0)\cdot E(0)\cdot G,\quad
    (1,0)\cdot E(1)\cdot G,\quad
    (1,0)\cdot E(-1)\cdot G
\end{equation}
are coordinates of the points $p_0$, $p_1$, $p_2$, $p_3$, respectively. Indeed,
if (\ref{eq:H.g}) holds for some basis $(g_0,g_1)$ we can take as $G$ the
coordinate matrix of $(g_0,g_1)$ w.~r.~t.\ $(e_0,e_1)$ in order to obtain
(\ref{eq:H.G}). Conversely, the rows of $G$ provide the coordinates w.~r.~t.\
$(e_0,e_1)$ of an appropriate basis of $M$ to guarantee
$\Harm(p_0,p_1,p_2,p_3)$.
\par
From $\Harm(p_0,p_1,p_2,p_3)$ follows $p_0\dis p_1$ and $p_i\dis p_j$ for all
$i\in\{0,1\}$ and all $j\in\{2,3\}$. \emph{Therefore all four points belong to
the same connected component of the distant graph $(\bP(M),\dis)$.} By virtue
of (\ref{eq:GL-R*}), we have
\begin{equation}\label{eq:2}
    p_2\dis p_3
    \quad\mbox{if, and only if,}\quad 2\in R^* .
\end{equation}
The inequality $p_2\neq p_3$ holds precisely when $-1\neq 1\in R$. (In
\cite[4.7]{blunck+h-03b} these two conditions erroneously got mixed up.)
\par
If $p_0,p_1,p_2$ are three mutually distant points of $\bP(M)$ then there is a
unique point of $\bP(M)$, say $p_3$ with $\Harm(p_0,p_1,p_2,p_3)$. This is the
well known \emph{uniqueness of the fourth harmonic point}. Since
$\Harm(p_0,p_1,p_2,p_3)$ is equivalent to $\Harm(p_0,p_1,p_3,p_2)$, there holds
as well the \emph{uniqueness of the third harmonic point}. The latter (less
prominent) property will be used when proving Lemma~\ref{lem:2}.
\par
Let $M'$ be a free left module of rank $2$ over a ring $R'$. A mapping
$\mu:\bP(M)\to\bP(M')$ will be called a \emph{harmonicity preserver} if it
takes all harmonic quadruples of $\bP(M)$ to harmonic quadruples of $\bP(M')$.
No further assumptions, like injectivity or surjectivity of $\mu$ are made
here. A simple, though important, property is that any harmonicity preserver
$\mu:\bP(M)\to\bP(M')$ is \emph{distant preserving}, i.~e.,
\begin{equation}\label{eq:dist-pr}
    p_0\dis p_1
    \quad\mbox{implies}\quad
    p_0^\mu \dis p_1^\mu
    \quad\mbox{for all}\quad
    p_0,p_1\in\bP(M).
\end{equation}
This follows readily from the existence of points $p_2$ and $p_3$ with
$\Harm(p_0,p_1,p_2,p_3)$.
\par
We close this section by quoting several examples of harmonicity preservers
$\bP(M)\to\bP(M')$.

\begin{exa}\label{exa:jordan}
Let $\alpha:R\to R'$ be a Jordan homomorphism, i.~e. a mapping satisfying
\begin{equation*}\label{eq:jordan}
  (x+y)^\alpha = x^\alpha + y^\alpha,\quad
  1^\alpha = 1',\quad
  (xyx)^\alpha = x^\alpha y^\alpha x^\alpha\quad
  \mbox{for all}\quad x,y\in R.
\end{equation*}
See, among others, \cite[p.~832]{herz-95a} or \cite[p.~2]{jacob-68a}. Also, let
$C$ be any connected component of the distant graph $(\bP(M),\dis)$. We select
bases $(e_0,e_1)$ and $(e_0',e_1')$ of $M$ and $M'$, respectively, subject to
the condition $Re_0\in C$. According to a result of A.~Blunck and the author
\cite[Thm.~4.4]{blunck+h-03b} the following (rather cumbersome) construction
gives a well defined mapping
\begin{equation}\label{eq:mu}
    \mu : C \to \bP(M') : p \mapsto p^\mu.
\end{equation}
By (\ref{eq:component}), any point $p\in C$ can be written in the form
$p=R(x_0e_0+x_1e_1)$ with
\begin{equation*}\label{eq:(x0,x1)}
    (x_0,x_1)=(1,0)\cdot  E(T)
\end{equation*}
for some $T\in\Seq(R)$, say $T=(t_1,t_2,\ldots,t_n)$ with $n\geq 0$. We use the
shorthand $T^\alpha:=(t_1^\alpha,t_2^\alpha,\ldots,t_n^\alpha)\in\Seq(R')$ and
let
\begin{equation}\label{eq:(x0',x1')}
    (x_0',x_1') := (1,0)\cdot E(T^\alpha).
\end{equation}
The point $p^\mu$ is defined as $R'(x_0'e_0'+x_1'e_1')$. By
\cite[Prop.~4.8]{blunck+h-03b}, $\Harm(p_0,p_1,p_2,p_3)$ implies
$\Harm(p_0^\mu,p_1^\mu,p_2^\mu,p_3^\mu)$ for all $p_0,p_1,p_2,p_3\in C$.
\par
The previous construction can be repeated for all connected components of the
distant graph on $\bP(M)$. Thereby is not necessary to stick to a fixed Jordan
homomorphism. Altogether this gives a globally defined harmonicity preserver
$\bP(M)\to \bP(M')$.
\par
One particular case, due to C.~Bartolone \cite{bart-89a}, deserves special
mention: Let $R$ be a ring of \emph{stable rank\/ $2$}
\cite[p.~1039]{veld-95a}. Then $\big(\bP(M),{\dis}\big)$ has a single connected
component, each of its points can be described in terms of at least one finite
sequence $T=(t_1,t_2)\in R^2$, and $\mu$ can be rewritten as
\begin{equation*}\label{eq:bart}
    \mu: \bP(M)\to \bP(M'): R\big((t_1t_2-1)e_0 + t_1 e_1\big)\mapsto
    R'\big((t_1^\alpha t_2^\alpha-1)e_0' + t_1^\alpha e_1'\big) .
\end{equation*}
\end{exa}

\begin{exa}\label{exa:hom}
We adopt the settings of Example~\ref{exa:jordan}, but we make the extra
assumption that $\alpha$ is a homomorphism of rings. Then
\begin{equation*}\label{eq:semilin}
    \sigma: M\to M' : x_0e_0 +x_1e_1 \mapsto x_0^\alpha e_0' +x_1^\alpha e_1'
    \quad\mbox{for all}\quad x_0,x_1\in R
\end{equation*}
is an $\alpha$-semilinear mapping and
\begin{equation*}\label{eq:alpha*}
    \alpha_* : \GL_2(R)\to\GL_2(R') : X \mapsto X^\alpha,
\end{equation*}
i.~e., $\alpha$ is applied to each entry of $X$, is a homomorphism of groups.
Thus for any basis $(a,b)$ of $M$ the image $(a^\sigma,b^\sigma)$ is a basis of
$M'$. Consequently, the mapping
\begin{equation*}\label{eq:lambda}
     \lambda:\bP(M)\to \bP(M') : Ra \mapsto R'(a^\sigma)
     \quad\mbox{(with~} a\in M
     \mbox{~admissible)}
\end{equation*}
is well defined, and it preserves harmonicity. The mapping $\mu$ from
(\ref{eq:mu}) is the \emph{restriction\/} of $\lambda$ to $C$. The matrix
$E(T^\alpha)$ from (\ref{eq:(x0',x1')}) now can be expressed as
$E(T)^{\alpha_*}$, since $E(t)^{\alpha_*}=E(t^\alpha)$ for all $t\in R$.
\end{exa}

\begin{exa}\label{exa:anti}
We adopt the settings of Example~\ref{exa:jordan}, but we make the extra
assumption that $\alpha$ is an antihomomorphism of rings. We have the
homomorphism
\begin{equation}\label{eq:alpha**}
    \alpha_{**} : \GL_2(R)\to\GL_2(R') :
    X \mapsto E(0)^{-1}\cdot\big((X^{-1})^{\Trans}\big)^\alpha\cdot E(0),
\end{equation}
where $(X^{-1})^\Trans$ denotes the transpose of $X^{-1}$ and $\alpha$ is
applied entrywise. (We must not use $\alpha_*$ in (\ref{eq:alpha**}), since
$(X^{-1})^\Trans$ need not be invertible.) A straightforward calculation shows
$E(t)^{\alpha_{**}}= E(t^\alpha)$ for all $t\in R$. Hence the matrix
$E(T^\alpha)$ from (\ref{eq:(x0',x1')}) now can be expressed as
$E(T)^{\alpha_{**}}$. This leads us to the definition of a mapping
\begin{equation*}\label{eq:delta}
    \delta : \bP(M)\to\bP(M') : R(x_0e_0+x_1e_1) \mapsto R'(x_0'e_0'+x_1'e_1')
\end{equation*}
which runs as follows: $(x_0,x_1)$ is chosen as the first row of any matrix
$X\in\GL_2(R)$ and $(x_0',x_1')$ is defined as the first row of the matrix
$X^{\alpha_{**}}$. By \cite[Ex.~4.8]{blunck+h-03b} this mapping is well
defined. An equivalent (and more lucid) definition of $\delta$ in terms of the
dual module of $M$ can be read off from \cite[Rem.~5.4]{blunck+h-01b} or
\cite[Prop.~3.3]{herz-87a}. Formula (\ref{eq:H.G}) provides an easy direct
proof for $\delta$ being a harmonicity preserver. The mapping $\mu$ from
(\ref{eq:mu}) is the \emph{restriction\/} of $\delta$ to $C$.
\par
It may happen that $\alpha:R\to R'$ is a homomorphism and an antihomomorphism.
Then $R^\alpha$ is a commutative subring of $R'$ and we have $(\det
X^{\alpha_{*}}) X^{\alpha_{**}} = X^{\alpha_{*}}$ for all $X\in\GL_2(R)$. So in
this case the mappings $\lambda$ and $\delta$ coincide.
\end{exa}

\section{Von Staudt's theorem}\label{se:staudt}

We already noted in Section~\ref{se:basic} that the distant graph on $\bP(M)$
has a single connected component if, and only if, $R$ is a $\GL_2$-ring. In
this case the following version of von Staudt's theorem provides a unified
algebraic description of harmonicity preservers, otherwise it gives only a
description on an arbitrarily chosen connected component.
\begin{thm}\label{thm:staudt}
\renewcommand{\theenumi}{\roman{enumi}}%
\renewcommand{\labelenumi}{{\rm (\theenumi)}}%
Let $M$ and $M'$ be free modules of rank $2$ over rings $R$ and $R'$,
respectively. Furthermore, let $R$ satisfy the two conditions:
\begin{enumerate}
\item\label{cond:i} Given $x_1,x_2,\ldots,x_5\in R$ there exists $x\in R$
    such that $x-x_1,x-x_2,\ldots,x-x_5$ are units in $R$.
\item\label{cond:ii} $2$ is a unit in $R$.
\end{enumerate}
Let $\mu:\bP(M)\to \bP(M')$ be a harmonicity preserver. Choose any connected
component, say $C$, of the distant graph $\big(\bP(M),{\dis}\big)$. Then there
exist a basis $(a_{0},a_{1})$ of $M$, a basis $(a_{0}',a_{1}')$ of $M'$, and a
Jordan homomorphism $\alpha:R\to R'$ such that the restriction of $\mu$ to $C$
admits the following description:
\begin{equation*}\label{eq:mu|C}
    \mu | C : C\to\bP(M') :
    R(x_0 a_{0} + x_1 a_{1})\mapsto R(x_0' a_{0}' + x_1' a_{1}') ,
\end{equation*}
where
\begin{equation}\label{eq:(x0,x1)(x0',x1')}
    (x_0,  x_1) = (1,0)\cdot E(T),
    \quad
    (x_0',  x_1') = (1,0)\cdot E(T^\alpha),
\end{equation}
and $T$ is any finite sequence of elements in $R$.
\end{thm}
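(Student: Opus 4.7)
The plan is to normalize bases of $M$ and $M'$ using three mutually distant points of $C$ together with their $\mu$-images, extract a map $\alpha : R \to R'$ from the action of $\mu$ on an affine chart of $C$, prove that $\alpha$ is a Jordan homomorphism, and finally verify that $\mu\vert C$ coincides with the harmonicity preserver associated to $\alpha$ via Example~\ref{exa:jordan}. Concretely, I pick a basis $(e_0, e_1)$ of $M$ with $Re_0 \in C$. The quadruple $(Re_0, Re_1, R(e_0+e_1), R(e_0-e_1))$ is harmonic, so all four points lie in $C$; by condition~(ii) they are even pairwise distant. Applying $\mu$ and invoking distance preservation gives a harmonic quadruple of pairwise distant points in $\bP(M')$, and a suitable rescaling produces a basis $(e_0', e_1')$ of $M'$ with $\mu(Re_0) = R'e_0'$, $\mu(Re_1) = R'e_1'$, $\mu(R(e_0+e_1)) = R'(e_0'+e_1')$; uniqueness of the fourth harmonic point forces $\mu(R(e_0-e_1)) = R'(e_0'-e_1')$. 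Rename these as $(a_0, a_1)$ and $(a_0', a_1')$. For every $t \in R$ the point $R(e_0+te_1) \in C$ is distant from $Re_1$, so its $\mu$-image is distant from $R'e_1'$ and writes uniquely as $R'(e_0' + \alpha(t) e_1')$; this defines a map $\alpha : R \to R'$ with $\alpha(0) = 0$ and $\alpha(1) = 1$.

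Additivity of $\alpha$ follows from a midpoint-style harmonic quadruple: for $s, t \in R$ with $s - t \in R^*$ and $m := (s+t)/2$, a direct calculation with the basis $(e_1, (s-m)^{-1}(e_0 + me_1))$ verifies that $\Harm(Re_1, R(e_0+me_1), R(e_0+se_1), R(e_0+te_1))$ holds. Running this through $\mu$ and repeating the calculation in $R'$ gives $2\alpha(m) = \alpha(s) + \alpha(t)$; varying the choice of $s, t$ and using condition~(i) to supply generic offsets, one distils the full additive law $\alpha(a+b) = \alpha(a) + \alpha(b)$ on all of $R$. To upgrade from additivity to the Jordan identity I consider more general harmonic quadruples $\Harm(R(e_0+se_1), R(e_0+te_1), R(e_0+ue_1), R(e_0+ve_1))$; these translate into algebraic relations among $s, t, u, v$ that $\alpha$ must preserve. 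Condition~(i) guarantees the existence of such quadruples for arbitrary target parameters, and the characterisation lemma of Limaye and Limaye from \cite{lima+l-77b}, announced in the introduction as the major tool, converts these relations into the Jordan axiom $(xyx)^\alpha = x^\alpha y^\alpha x^\alpha$, so $\alpha$ is a Jordan homomorphism.

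The Jordan homomorphism $\alpha$ now defines, via Example~\ref{exa:jordan}, a harmonicity preserver $\tilde \mu : C \to \bP(M')$ given by the formula (\ref{eq:(x0,x1)(x0',x1')}). By construction, $\tilde \mu$ agrees with $\mu$ on $Re_0$, on $Re_1$, and on the entire affine chart $\{R(e_0+te_1) : t \in R\}$. Every remaining point of $C$ has a coordinate representation $(1,0)\cdot E(T)$ with $T \in \Seq(R)$ of length $n \geq 2$, and an induction on $n$, propagated by the uniqueness of the third (or fourth) harmonic point at each step, gives $\mu\vert C = \tilde \mu$ throughout $C$. I expect the principal obstacle to be the Jordan-homomorphism step: additivity is a short midpoint argument, but the cubic Jordan identity has to be distilled from richer harmonic quadruples, and this is precisely where both condition~(i) (to guarantee the existence of these quadruples for arbitrary parameters) and the nontrivial characterisation lemma from \cite{lima+l-77b} are indispensable.
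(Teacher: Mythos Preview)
Your overall architecture matches the paper's: normalise bases, define $\alpha$ on an affine chart, show it is a Jordan homomorphism, then propagate to all of $C$ by induction on the length of $T$. However, you are missing the one technical lemma that the paper explicitly calls ``the backbone of our demonstration'', namely Lemma~\ref{lem:3}: if one passes from $(e_0,e_1)$ to the new basis $(f_0,f_1)=(te_0+e_1,-e_0)$ (matrix $E(t)$) and correspondingly from $(e_0',e_1')$ to $(f_0',f_1')=(t^\beta e_0'+e_1',-e_0')$, then the \emph{same} map $\beta$ still represents $\mu$ on the new affine chart. Both of your vague steps depend on this lemma, and neither works without it.

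First, the Jordan step. The Limaye--Limaye lemma you invoke does not manufacture the Jordan identity out of ``algebraic relations among $s,t,u,v$'' coming from harmonic quadruples entirely inside a single affine chart; its second part takes as \emph{input} the inversion law $(x^{-1})^\beta=(x^\beta)^{-1}$ for all $x\in R^*$ and outputs $(xy+yx)^\beta=x^\beta y^\beta+y^\beta x^\beta$. In the paper this inversion law is obtained by applying Lemma~\ref{lem:3} with $t=0$, i.e.\ by comparing the description of $\mu$ in the chart around $Re_0$ with its description in the chart around $Re_1$ on their overlap. A quadruple of four points $R(e_0+s e_1),\ldots$ in one chart gives only the midpoint relation you already used for additivity; it does not see inversion, which is a two-chart phenomenon.

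Second, the induction. Saying that it is ``propagated by the uniqueness of the third (or fourth) harmonic point'' is not enough: for a point $p$ with sequence $T=(t_1,\ldots,t_n)$ you would need three mutually distant points with strictly shorter sequences that form a harmonic quadruple together with $p$, and you give no mechanism for producing them. The paper's induction works differently: at each step it changes basis by $E(t_2)$ (invoking Lemma~\ref{lem:3}, which guarantees that the same $\alpha$ represents $\mu$ in the new chart) and thereby reduces the length of the sequence by one. So again the missing ingredient is precisely Lemma~\ref{lem:3}.
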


We postpone the proof until we have established four auxiliary results. In all
of them we tacitly adopt the assumptions of Theorem~\ref{thm:staudt}.
Lemma~\ref{lem:1} is self-explanatory. In Lemma~\ref{lem:2} we exhibit a
mapping $\beta:R\to R'$ which can be viewed as a ``local coordinate
representation of $\mu$''. Next, in Lemma~\ref{lem:3}, we establish that ``new
local coordinates'' (describing other parts of the given projective lines) can
be chosen in such a way that the ``new local coordinate representations'' of
$\mu$ coincides with the ``old'' one. This observation is the backbone of our
demonstration. Afterwards, in Lemma~\ref{lem:4}, the mapping $\beta$ is shown
to be a Jordan homomorphism. The actual proof Theorem~\ref{thm:staudt} amounts
then to verifying that the given mapping $\mu|C$ coincides with the harmonicity
preserver which arises from $\beta$ according to Example~\ref{exa:jordan}. It
goes without saying that part of our demonstration follows the same lines as
previous work by other authors. Condition~(\ref{cond:i}) is taken from
\cite{lima+l-77b}. It is equivalent to the following property of the projective
line $\bP(M)$:
\begin{enumerate}
\item[(i')]\label{cond:i'} Given points $p_1,p_2,\ldots,p_5\in \bP(M)$, all
    of which are distant to some point $p_0\in\bP(M)$, there exists $p\in
    \bP(M)$ which is distant to $p_0,p_1,\ldots,p_5$.
\end{enumerate}
The equivalence follows easily from (\ref{eq:GL-R*}) upon choosing a basis
$(e_0,e_1)$ of $M$ with $p_0=Re_0$. Then $p_i=R(x_i e_0+e_1)$ for
$i\in\{1,2,\ldots,5\}$ and $p=R(x e_0+e_1)$. Take notice that neither the
elements $x_1,x_2,\ldots,x_5$ nor the points $p_1,p_2,\ldots,p_5$ are assumed
to be distinct.

\begin{lem}\label{lem:1}
$2$ is a unit in $R'$.
\end{lem}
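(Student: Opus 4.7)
The plan is to transport a single harmonic quadruple from $\bP(M)$ to $\bP(M')$ via $\mu$ and read off unit-ness of $2$ in $R'$ from equation~(\ref{eq:2}) applied to the image quadruple. Condition~(\ref{cond:i}) will not be needed; only~(\ref{cond:ii}) and the basic properties of harmonic quadruples and of $\mu$ collected in Section~\ref{se:basic} will be used.

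Concretely, I would first fix any basis $(g_0,g_1)$ of $M$ and form the four points $p_0:=Rg_0$, $p_1:=Rg_1$, $p_2:=R(g_0+g_1)$, $p_3:=R(g_0-g_1)$. By definition this quadruple is harmonic, so $\Harm(p_0,p_1,p_2,p_3)$ holds. Because $2\in R^*$ by hypothesis~(\ref{cond:ii}), equation~(\ref{eq:2}) applied in $\bP(M)$ gives $p_2\dis p_3$. Since $\mu$ preserves harmonicity, $\Harm(p_0^\mu,p_1^\mu,p_2^\mu,p_3^\mu)$ holds in $\bP(M')$, and since $\mu$ is distant preserving by~(\ref{eq:dist-pr}), we also have $p_2^\mu\dis p_3^\mu$.

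Now I would apply equation~(\ref{eq:2}) in the opposite direction, but in $\bP(M')$ this time: the image quadruple is harmonic and its third and fourth members are distant, so the very same equivalence forces $2\in (R')^*$, which is the claim. No step here looks hard; the only thing to be careful about is distinguishing the two uses of~(\ref{eq:2})—once in $R$ to guarantee $p_2\dis p_3$ upstairs, and once in $R'$ to conclude that $2$ is a unit downstairs—and to quote~(\ref{eq:dist-pr}) to bridge them.
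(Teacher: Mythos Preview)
Your proof is correct and follows essentially the same route as the paper's own proof: build a harmonic quadruple, use~(\ref{cond:ii}) and~(\ref{eq:2}) to get $p_2\dis p_3$, transport via~(\ref{eq:dist-pr}), and then read~(\ref{eq:2}) backwards in $R'$. You are slightly more explicit than the paper in noting that the image quadruple is harmonic (which is indeed what justifies invoking~(\ref{eq:2}) on the $R'$ side), but the argument is the same.
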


\begin{proof}
Since $M$ is free of rank $2$, there exists a harmonic quadruple
$(p_0,p_1,p_2,p_3)$ in $\bP(M)^4$. We read off $p_2\dis p_3$ from (\ref{eq:2})
and (\ref{cond:ii}). Application of $\mu$ yields $p_2^\mu\dis p_3^\mu$ by
virtue of (\ref{eq:dist-pr}). Now (\ref{eq:2}) in turn shows that $2$ is a unit
in $R'$.
\end{proof}

\begin{lem}\label{lem:2}
Given bases $(e_0,e_1)$ of $M$ and $(e_0',e_1')$ of $M'$ such that
\begin{equation}\label{eq:H.e}
    (Re_0)^\mu = R'e_0',\quad (Re_1)^\mu = R'e_1',\quad
    \big(R(e_0\pm e_1)\big)^\mu = R'(e_0'\pm e_1')
\end{equation}
there exists a unique mapping $\beta:R\to R'$ with the property
\begin{equation}\label{eq:beta.e}
    \big(R(x e_0+ e_1)\big)^\mu = R'(x^\beta e_0'+ e_1')
    \quad\mbox{for all}\quad x\in R.
\end{equation}
This $\beta$ is additive and satisfies $1^\beta=1$.
\end{lem}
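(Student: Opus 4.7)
The plan is to define $\beta$ via distance preservation, derive a Jensen-type identity from the uniqueness of the third harmonic point in $\bP(M')$, and then bootstrap to full additivity using condition (\ref{cond:i}).

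\emph{Definition and trivial values.} For each $x\in R$ the matrix $\begin{pmatrix}x & 1\\ 1 & 0\end{pmatrix}$ lies in $\GL_2(R)$, so $R(xe_0+e_1)\dis Re_0$. By (\ref{eq:dist-pr}) and the first hypothesis in (\ref{eq:H.e}), the image $(R(xe_0+e_1))^\mu$ is distant from $R'e_0'$, so any admissible generator has a unit $e_1'$-coefficient; left-scaling that coefficient to $1$ gives a unique normalized generator $x^\beta e_0'+e_1'$. This defines $\beta$ and proves the uniqueness in (\ref{eq:beta.e}). The remaining parts of (\ref{eq:H.e}) directly give $0^\beta=0$ (from $Re_1$) and $1^\beta=1$ (from $R(e_0+e_1)$).

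\emph{Jensen identity.} For $x\in R^*$ and $y\in R$, the pair $(xe_0,\,ye_0+e_1)$ is a basis of $M$, and its sum and difference produce
$$\Harm(Re_0,\,R(ye_0+e_1),\,R((x+y)e_0+e_1),\,R((y-x)e_0+e_1)).$$
Applying $\mu$ and invoking distance preservation, the first, second, and fourth images are mutually distant in $\bP(M')$. By uniqueness of the third harmonic point the third image is completely determined; running the analogous sum/difference construction inside $M'$ identifies its coordinate as $2y^\beta-(y-x)^\beta$, which must equal $(x+y)^\beta$ by the definition of $\beta$. Hence
$$\beta(y+x)+\beta(y-x)=2\beta(y)\quad\text{for all }x\in R^*,\,y\in R.$$

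\emph{Additivity --- the substantive step.} By Lemma~\ref{lem:1} and condition (\ref{cond:ii}), $2$ is invertible in both $R$ and $R'$. For $u,v\in R$ with $u+v,\,u-v\in R^*$, applying the Jensen identity first with $(y,x)=((u+v)/2,\,(u-v)/2)$ and then with $(y,x)=((u+v)/2,\,(u+v)/2)$ yields $\beta(u)+\beta(v)=2\beta((u+v)/2)=\beta(u+v)$. For arbitrary $a,b\in R$, condition (\ref{cond:i}) applied to the five elements $-(a+b),\,a+b,\,b-a,\,-a,\,a$ supplies $c\in R$ for which $c+a+b,\,c-a-b,\,c+a-b,\,c+a,\,c-a$ are all units --- exactly the sum/difference conditions needed to apply the restricted additivity just established to each of the three pairs $(a+b,c),\,(a+c,b),\,(a,c)$. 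The resulting identities
$$\beta(a+b+c)=\beta(a+b)+\beta(c),\ \ \beta(a+b+c)=\beta(a+c)+\beta(b),\ \ \beta(a+c)=\beta(a)+\beta(c)$$
telescope to $\beta(a+b)=\beta(a)+\beta(b)$. This reduction is the only non-mechanical step: condition (\ref{cond:i}) is tailored precisely so that its five free unit conditions coincide with those demanded by this three-step decomposition.
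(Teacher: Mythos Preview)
Your proof is correct and follows essentially the same route as the paper's. Both define $\beta$ via distance preservation, extract a midpoint/Jensen identity from a harmonic quadruple containing $Re_0$, and then bootstrap to full additivity using condition~(\ref{cond:i}); the paper phrases the identity as $\big((x+y)/2\big)^\beta=(x^\beta+y^\beta)/2$ for $x-y\in R^*$ and delegates the bootstrap to \cite[Lemma~1]{lima+l-77b}, whereas you obtain the equivalent form $\beta(y+x)+\beta(y-x)=2\beta(y)$ for $x\in R^*$ and carry out the reduction explicitly.
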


\begin{proof}
For any $x\in R$ the point $p:=R(x e_0+ e_1)$ is distant from $Re_0$. From
(\ref{eq:dist-pr}) follows $p^\mu \dis R'e_0'$ so that the point $p^\mu$ has a
unique generator of the form $x'e_0'+e_1'$ with $x'\in R'$. We therefore can
define a unique mapping $\beta:R \to R'$ satisfying condition (\ref{eq:beta.e})
by $x^\beta:=x'$.
\par
By (\ref{eq:GL-R*}), for all $x,y\in R$ with $x-y\in R^*$ the points
\begin{equation*}\label{eq:mitte}
\begin{aligned}
    &q_0:=R (x e_0+e_1),&
    &q_1:=R(y e_0+e_1),\\
    &q_2:=R\big((x+y)e_0+2 e_1\big),\quad&
    &q_3:=R\big((x-y)e_0\big)=R e_0
\end{aligned}
\end{equation*}
satisfy $\Harm(q_0,q_1,q_2,q_3)$. From (\ref{eq:beta.e}),
condition~(\ref{cond:ii}),
 and (\ref{eq:H.e}) follows
\begin{equation}\label{eq:mitte-mu}
\begin{aligned}
    &q_0^\mu=R (x^\beta e_0'+e_1'),&
    &q_1^\mu=R(y^\beta e_0'+e_1'),    \\
    &q_2^\mu=R\left(\left(\frac{x+y}{2}\right)^\beta e_0'+ e_1'\right),\quad&
    &q_3^\mu=R e_0'.
\end{aligned}
\end{equation}
We infer from (\ref{eq:dist-pr}) that $q_0^\mu \dis q_1^\mu$, and so $(x^\beta
e_0'+e_1',y^\beta e_0'+e_1')$ is a basis of $M'$. Now (\ref{eq:GL-R*}) yields
that $x^\beta-y^\beta$ is a unit in $R'$, whence
$q_3^\mu=R'\big((x^\beta-y^\beta)e_0'\big)$. By defining
\begin{equation}\label{eq:mitte'}
    q_2':=R'\big((x^\beta+y^\beta)e_0'+2 e_1'\big)
\end{equation}
we obtain $\Harm(q_0^\mu,q_1^\mu,q_2',q_3^\mu)$. The uniqueness of the third
harmonic point (see Section~\ref{se:basic}) shows $q_2'=q_2^\mu$. Comparing
(\ref{eq:mitte-mu}) with (\ref{eq:mitte'}) and taking into account
Lemma~\ref{lem:1} gives
\begin{equation}\label{eq:1/2}
    \left( \frac{x+y}{2} \right)^\beta = \frac{x^\beta+y^\beta}{2}
    \quad\mbox{for all}\quad x,y\in R
    \quad\mbox{with}\quad x-y\in R^*.
\end{equation}
Also $(R e_1)^\mu = R' e_1'$ implies $0^\beta=0$.
\par
Due to the last observation, condition~(\ref{cond:i}), Lemma~\ref{lem:1}, and
(\ref{eq:1/2}), we can apply the first part of \cite[Lemma~1]{lima+l-77b}. This
establishes that $\beta$ is additive. Moreover, (\ref{eq:H.e}) implies $1^\beta
= 1$.
\end{proof}

\begin{lem}\label{lem:3}
Let $(e_0,e_1)$, $(e_0',e_1')$ and $\beta$ be given as in
Lemma~{\rm\ref{lem:2}}. Let $t\in R$ be fixed. Then
\begin{equation}\label{eq:basis.f}
    (f_0,f_1):=(t e_0+ e_1,-e_0) \quad\mbox{and}\quad
    (f_0',f_1'):=(t^\beta e_0'+ e_1',-e_0')
\end{equation}
are bases of $M$ and $M'$, respectively, and there holds
\begin{equation}\label{eq:beta.f}
    \big(R(x f_0+ f_1)\big)^\mu = R'(x^\beta f_0'+ f_1')
    \quad\mbox{for all}\quad x\in R.
\end{equation}
\end{lem}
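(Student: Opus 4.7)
My plan is to apply Lemma~\ref{lem:2} a second time, now with the bases $(f_0,f_1)$ and $(f_0',f_1')$, and then identify the resulting coordinate map with the original $\beta$. First, $(f_0,f_1)$ is a basis of $M$ because the transition matrix $E(t)\in\GL_2(R)$, and likewise $(f_0',f_1')$ is a basis of $M'$. Next I verify that the four harmonic conditions of (\ref{eq:H.e}) hold for the new bases: using $Rf_0=R(te_0+e_1)$, $Rf_1=Re_0$, and $R(f_0\pm f_1)=R((t\mp 1)e_0+e_1)$ together with Lemma~\ref{lem:2}, additivity of $\beta$, and $1^\beta=1$, each of $(Rf_0)^\mu=R'f_0'$, $(Rf_1)^\mu=R'f_1'$, $(R(f_0\pm f_1))^\mu=R'(f_0'\pm f_1')$ is routine. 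Lemma~\ref{lem:2} applied to the new bases then produces a unique additive map $\beta':R\to R'$ with $1^{\beta'}=1$ satisfying $(R(xf_0+f_1))^\mu=R'(x^{\beta'}f_0'+f_1')$ for every $x\in R$, and the task reduces to showing $\beta'=\beta$.

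For $x\in R^*$, rewriting $R(xf_0+f_1)=R((xt-1)e_0+xe_1)=R((t-x^{-1})e_0+e_1)$ and applying Lemma~\ref{lem:2} to the old basis gives $(R(xf_0+f_1))^\mu=R'((t^\beta-(x^{-1})^\beta)e_0'+e_1')$. Re-expressing $R'(x^{\beta'}f_0'+f_1')$ in $(e_0',e_1')$-coordinates and comparing forces $x^{\beta'}=((x^{-1})^\beta)^{-1}$, so the identity $\beta'=\beta$ on $R^*$ is equivalent to the inverse-preservation property $x^\beta(x^{-1})^\beta=1$ for $x\in R^*$. To obtain this I exploit the harmonic quadruple $\Harm(R(ae_0+e_1),R(a^{-1}e_0+e_1),R(e_0+e_1),R(-e_0+e_1))$, which holds whenever $a\in R^*$ and $a\pm 1\in R^*$ (equivalently $a^2-1\in R^*$); harmonicity is witnessed by the basis $(ae_0+e_1,e_0+ae_1)$ of $M$. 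Under $\mu$ this quadruple maps, via Lemma~\ref{lem:2} applied to the three old-chart points, to a quadruple in $\bP(M')$ that must remain harmonic. Requiring the existence of a basis of the form $(\lambda_0(a^\beta e_0'+e_1'),\lambda_1((a^{-1})^\beta e_0'+e_1'))$ certifying that harmonicity yields two linear equations in $\lambda_0,\lambda_1$; their sum and difference (using $2\in R'^*$ from Lemma~\ref{lem:1}) give $\lambda_1=\lambda_0 a^\beta$ and $\lambda_0=\lambda_1(a^{-1})^\beta$, whence $a^\beta(a^{-1})^\beta=1$.

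To finish, I would propagate the inverse-preservation identity from the subset $\{a\in R^*:a^2-1\in R^*\}$ to all of $R^*$; combined with additivity of $\beta,\beta'$ and Condition~(\ref{cond:i}), under which every element of $R$ is a difference of two units, this delivers $\beta'=\beta$ on $R$. For $a\in R^*$ outside the subset already handled I aim to decompose $a=b+c$ with both $b$ and $c=a-b$ in the ``good'' subset, using Condition~(\ref{cond:i}) to dodge the relevant bad elements, and then invoke additivity. The hardest point of the argument is precisely this bookkeeping: the decomposition requires six simultaneous unit-difference conditions on $b$, while (\ref{cond:i}) supplies only five at a time, so I anticipate either iterating the avoidance argument or appealing to a refined additivity-extension result in the spirit of the Limaye--Limaye lemma already invoked in the proof of Lemma~\ref{lem:2}.
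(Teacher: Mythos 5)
Your part (a) --- checking that $(f_0,f_1)$ and $(f_0',f_1')$ are bases, verifying (\ref{eq:H.e}) for them, and invoking Lemma~\ref{lem:2} to obtain an additive $\beta'$ with $1^{\beta'}=1$ --- is exactly the paper's first step, and your subsequent computations are correct as far as they go: for $x\in R^*$ one does get $x^{\beta'}=\big((x^{-1})^\beta\big)^{-1}$, and the quadruple $\Harm\big(R(ae_0+e_1),R(a^{-1}e_0+e_1),R(e_0+e_1),R(e_0-e_1)\big)$ does force $a^\beta(a^{-1})^\beta=1$ when $a,a\pm1\in R^*$. The genuine gap is the one you flag yourself, and it is not mere bookkeeping: your route only yields $x^{\beta'}=x^\beta$ on the set $G=\{x:\ x,\,x-1,\,x+1\in R^*\}$, which is cut out by \emph{three} unit conditions. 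Extending by additivity to arbitrary $x\in R$ then requires some $b$ with $b-y\in R^*$ for the \emph{six} elements $y\in\{0,1,-1,x,x-1,x+1\}$, while condition~(\ref{cond:i}) guarantees avoidance of only five; iterating (writing $x$ as a sum of three or more terms of $G$) does not reduce the count, because the last summand always contributes three fresh conditions on top of the three carried by its predecessor. Your fallback of first ``propagating inverse-preservation to all of $R^*$'' is also unsupported: the identity $a^\beta(a^{-1})^\beta=1$ is not additive in $a$, so there is no evident way to transfer it from $G$ to a unit $a$ with, say, $a+1\notin R^*$.

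The paper sidesteps all of this by never passing through inverses. For any $x$ with only $1\pm x\in R^*$ (two conditions, and $x$ itself need not be a unit) it forms the harmonic quadruple on $g_0=f_0-f_1$, $g_1=f_0+f_1$, $g_2=2(f_0-xf_1)$, $g_3=2(xf_0-f_1)$. The first three points have the shape $R(\,\cdot\,e_0+e_1)$ and are evaluated with the old chart $\beta$, while $g_3$ generates $R\big((-x)f_0+f_1\big)$ and is evaluated with the new chart; uniqueness of the fourth harmonic point then gives $x^{\beta'}=x^\beta$ directly on $\{x:1\pm x\in R^*\}$. The additive extension afterwards needs only four simultaneous unit conditions, well within what (\ref{cond:i}) supplies. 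To salvage your argument you would have to replace your quadruple by one of this kind; as written, the final step cannot be completed from the stated hypotheses.
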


\begin{proof}
(a) The matrix of $(f_0,f_1)$ w.~r.~t.\ $(e_0,e_1)$ is $E(t)\in\E_2(R)$. So
$(f_0,f_1)$ is a basis of $M$. Likewise, $E(t^\beta)\in\E_2(R')$ shows that
$(f_0',f_1')$ is a basis of $M'$. We deduce $(Rf_0)^\mu = R' f_0'$ from
(\ref{eq:beta.e}), whereas (\ref{eq:H.e}) yields $(Rf_1)^\mu = R' f_1'$. Now
the additivity of $\beta$ together with $1^\beta=1$ gives
\begin{equation}\label{}
    \big(R(f_0\pm f_1)\big)^\mu = \Big(R\big( (t\mp 1)e_0 + e_1 \big)\Big)^\mu
        = R'\big( (t^\beta \mp 1)e_0' + e_1' \big)
        = R' (f_0'\pm f_1') .
\end{equation}
Consequently, as in Lemma~\ref{lem:2}, there is a unique mapping $\gamma:R\to
R'$ such that
\begin{equation}\label{eq:gamma.f}
    \big(R(x f_0+ f_1)\big)^\mu = R'(x^\gamma f_0'+ f_1')
    \quad\mbox{for all}\quad x\in R.
\end{equation}
Also, as before, $\gamma$ turns out to be additive with $1^\gamma=1$.
\par
(b) Consider a fixed $x\in R$ such that $1+x$ and $1-x$ are units. We define
\begin{equation*}\label{eq:g0123}
\begin{split}
    &g_0:=    (t+1)e_0+e_1 = f_0-f_1,\\
    &g_1:=    (t-1)e_0+e_1 = f_0+f_1,\\
    &g_2:=    2\big((t+x)e_0+e_1\big) = 2(f_0-x f_1),\\
    &g_3:=    2\big((1+xt)e_0+x e_1\big)   = 2(x f_0-f_1) .
\end{split}
\end{equation*}
The matrix of $(g_0,g_1)$ w.~r.~t.\ $(f_0,f_1)$ is in $\GL_2(R)$ due to $2\in
R^*$ and (\ref{eq:GL-R*}), whence $(g_0,g_1)$ is a basis. The equations
$(1+x)g_0+(1-x)g_1=g_2$ and $(1+x)g_0-(1-x)g_1=g_3$ yield that the points
$p_i:=Rg_i$, $i\in\{0,1,2,3\}$, satisfy $\Harm(p_0,p_1,p_2,p_3)$. We define
\begin{equation*}\label{eq:g0123'}
\begin{split}
    &g_0':=     \big((t^\beta+1)e_0'+e_1'\big) , \\
    &g_1':=     \big((t^\beta-1)e_0'+e_1'\big) ,\\
    &g_2':=    2\big((t^\beta+x^\beta)e_0'+e_1'\big) ,\\
    &g_3':=    2\big((1+x^\beta t^\beta)e_0'+x^\beta e_1'\big) ,
\end{split}
\end{equation*}
whence Lemma~\ref{lem:2} gives
\begin{equation}\label{eq:bilder}
     p_0^\mu= R'g_0' ,\quad  p_1^\mu= R'g_1' , \quad   p_2^\mu= R'g_2' .
\end{equation}
Now $\Harm(p_0^\mu,p_1^\mu,p_2^\mu,p_3^\mu)$ implies $p_0^\mu \dis p_2^\mu \dis
p_1^\mu$ so that
\begin{equation*}\label{eq:1+-x}
    \begin{pmatrix}
    t^\beta\ \pm 1 & 1 \\t^\beta +x^\beta  & 1
    \end{pmatrix}
    \in \GL_2(R')
\end{equation*}
which in turn, by (\ref{eq:GL-R*}), gives that $1+x^\beta$ and $1-x^\beta$ are
units in $R'$. We therefore are in a position to proceed as above in order to
establish $\Harm(R'g_0',R'g_1',R'g_2',R'g_3')$. By (\ref{eq:bilder}) and the
uniqueness of the fourth harmonic point, we obtain
\begin{equation*}\label{eq:p3.mu}
    p_3^\mu = R'g_3' = R'\big((1+x^\beta t^\beta)e_0'+x^\beta e_1'\big)
    = R'(x^\beta f_0' -  f_1').
\end{equation*}
On the other hand, writing $p_3=R\big((-x)f_0+f_1\big)$ allows us to apply
(\ref{eq:gamma.f}) which gives $p_3^\mu = R'\big((-x)^\gamma f_0' + f_1'\big)$.
The additivity of $\gamma$ yields
\begin{equation}\label{eq:beta=gamma.prov}
    x^\beta = x^\gamma \quad\mbox{for all}\quad  x\in R
    \quad\mbox{with}\quad 1+x
    \quad\mbox{and}\quad 1-x
    \quad\mbox{units.}
\end{equation}
\par
(c) If $x$ is any element of $R$ then, by condition~(\ref{cond:i}), there
exists $y\in R$ with $1+y$, $1-y$, $1+(x+y)$, and $1-(x+y)$ units. We infer
$y^\beta =y^\gamma$ and $(x+y)^\beta = (x+y)^\gamma$ from
(\ref{eq:beta=gamma.prov}) whence, by the additivity of $\beta$ and $\gamma$,
we obtain
\begin{equation*}\label{eq:beta=gamma}
    x^\beta = x^\gamma \quad\mbox{for all}\quad  x\in R.
\end{equation*}
This completes the proof of (\ref{eq:beta.f}).
\end{proof}

\begin{lem}\label{lem:4}
The mapping $\beta$ from Lemma~{\rm\ref{lem:2}} is a Jordan homomorphism.
\end{lem}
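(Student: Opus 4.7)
My plan is to reduce the Jordan property of $\beta$ to the preservation of inversion on units, and then to invoke the remaining portion of \cite[Lemma~1]{lima+l-77b}---the same Limaye--Limaye lemma whose first part was already used to obtain additivity in Lemma~\ref{lem:2}. As a preparation, specialising the argument of Lemma~\ref{lem:2} to $y=0$ shows that $\beta$ carries units to units: for $x\in R^*$ one has $x-0\in R^*$, hence $x^\beta-0^\beta=x^\beta\in(R')^*$.

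The heart of the matter is to consider, for a fixed $x\in R^*$ and an arbitrary $t\in R$, the point
\begin{equation*}
    p := R\bigl((xt-1)e_0 + xe_1\bigr)
\end{equation*}
and to compute $p^\mu$ in two different ways. With the bases $(f_0,f_1)=(te_0+e_1,-e_0)$ and $(f_0',f_1')=(t^\beta e_0'+e_1',-e_0')$ furnished by Lemma~\ref{lem:3}, one checks that $p=R(xf_0+f_1)$, so equation~(\ref{eq:beta.f}) gives
\begin{equation*}
    p^\mu = R'\bigl(x^\beta f_0' + f_1'\bigr) = R'\bigl((x^\beta t^\beta - 1)e_0' + x^\beta e_1'\bigr) = R'\bigl((t^\beta - (x^\beta)^{-1})e_0' + e_1'\bigr),
\end{equation*}
the last equality obtained by left-multiplying the generator by $(x^\beta)^{-1}$. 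Alternatively, left-multiplying the original generator of $p$ by $x^{-1}$ shows that $p=R\bigl((t-x^{-1})e_0+e_1\bigr)$, so Lemma~\ref{lem:2} combined with the additivity of $\beta$ gives
\begin{equation*}
    p^\mu = R'\bigl((t^\beta - (x^{-1})^\beta)e_0' + e_1'\bigr).
\end{equation*}
Since both expressions have $e_1'$-coefficient equal to $1$, their generators must coincide on the nose, whence $(x^{-1})^\beta = (x^\beta)^{-1}$ for every $x\in R^*$.

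With this in hand, $\beta$ is an additive map satisfying $1^\beta=1$, sending $R^*$ into $(R')^*$, and commuting with inversion on units, while condition~(\ref{cond:i}) together with Lemma~\ref{lem:1} supplies the ``sufficiently many units'' hypothesis. These are precisely the assumptions under which the remaining assertion of \cite[Lemma~1]{lima+l-77b}---a Hua-type characterisation of Jordan homomorphisms---forces $\beta$ to be a Jordan homomorphism. The main obstacle is conceptual rather than computational: one must notice that a single point admits two natural coordinate descriptions, one via Lemma~\ref{lem:2} applied to the basis $(e_0,e_1)$ and another via Lemma~\ref{lem:3} applied to the shifted basis $(f_0,f_1)$, and that equating the resulting formulas for $p^\mu$ is exactly what produces the inversion identity; the Hua-type mechanics in \cite{lima+l-77b} then do the rest.
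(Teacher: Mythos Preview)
Your argument is correct and essentially identical to the paper's: both compute $p^\mu$ in two ways---once via (\ref{eq:beta.e}) and once via (\ref{eq:beta.f})---to obtain $(x^{-1})^\beta=(x^\beta)^{-1}$ for all $x\in R^*$, and then invoke the second part of \cite[Lemma~1]{lima+l-77b}. The only differences are cosmetic: the paper specialises to $t=0$ from the outset (your general $t$ is harmless but superfluous), and it is slightly more explicit about the endgame, recording that \cite[Lemma~1]{lima+l-77b} yields only the identity $(xy+yx)^\beta=x^\beta y^\beta+y^\beta x^\beta$, with the passage to ``Jordan homomorphism'' then a separate well-known fact (\cite[p.~47]{herst-69a}, \cite[p.~320]{hua-52a}) using $2\in R^*\cap(R')^*$.
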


\begin{proof}
We make use of Lemma~\ref{lem:3} in the special case $t=0$, i.~e.,
$(f_0,f_1)=(e_1,-e_0)$ and $(f_0',f_1')=(e_1',-e_0')$. Given any $x\in R^*$ we
calculate the image of $R(xe_0+e_1) = R(-x^{-1}f_0 + f_1)$ according to
(\ref{eq:beta.e}) and (\ref{eq:beta.f}). This gives
\begin{equation*}\label{eq:invert}
    R'(x^\beta e_0'+e_1') =
    R'\big((-x^{-1} )^\beta f_0' + f_1'\big) =
    R'\big((-x^{-1})^\beta e_1' - e_0'\big).
\end{equation*}
Since $x^\beta e_0'+e_1'$ and $(-x^{-1})^\beta e_1' - e_0'$ are admissible
generators of the same point, there exists a unit $u' \in R'$ with $u'(x^\beta
e_0'+e_1') = (-x^{-1})^\beta e_1' - e_0'$. Now $u'x^\beta=-1$ implies that
$x^\beta$ is a unit in $R'$ and, by the additivity of $\beta$, we obtain
\begin{equation}\label{eq:inv}
    (x^\beta)^{-1} = (x^{-1})^\beta \quad\mbox{for all}\quad  x\in R^*.
\end{equation}
Due to $1^\beta=1$ and (\ref{eq:inv}) we are in a position to apply also the
second part of \cite[Lemma~1]{lima+l-77b} which establishes that $\beta$
satisfies
\begin{equation}\label{eq:xy+yx}
    (xy+yx)^\beta = x^\beta y^\beta +y^\beta x^\beta
    \quad\mbox{for all}\quad  x,y\in R.
\end{equation}
Recall that $2$ is a unit in $R$ by condition~(\ref{cond:ii}), and also a unit
in $R'$ by Lemma~\ref{lem:1}. Moreover, from Lemma~\ref{lem:2}, $\beta$ is
additive and satisfies $1^\beta=1$. It is well known that under these
circumstances (\ref{eq:xy+yx}) characterises $\beta$ as being a Jordan
homomorphism; see, e.~g., \cite[p.~47]{herst-69a} or \cite[p.~320]{hua-52a}.
\end{proof}

\begin{proof}[Proof of Theorem {\rm\ref{thm:staudt}}]
Choose any point of the connected component $C$, say $p_0=R a_{0}$, and any
$a_{1}\in M$ such that $(a_{0},a_{1})$ is a basis of $M$. Let $p_1:=R a_{1}$,
$p_2:= R(a_{0}+a_{1})$, and $p_3:= R(a_{0}-a_{1})$. Then
$\Harm(p_0,p_1,p_2,p_3)$ implies $\Harm(p_0^\mu,p_1^\mu,p_2^\mu,p_3^\mu)$ so
that there exists a basis $(a_{0}',a_{1}')$ of $M'$ satisfying
\begin{equation}\label{eq:H.a}
    (Ra_{0})^\mu= R' a_{0}',\quad
    (Ra_{1})^\mu=R' a_{1}',\quad
    \big(R(a_{0}\pm a_{0})\big)^\mu=R' (a_{0}'\pm a_{1}') .
\end{equation}
We apply Lemma~\ref{lem:2} to the bases $(a_0,a_1)$ and $(a_0',a_1')$, but
relabel the mapping $\beta$ from there as $\alpha$. So, by (\ref{eq:beta.e})
and Lemma~\ref{lem:4}, there exists a Jordan homomorphism $\alpha:R\to R'$ with
\begin{equation}\label{eq:alpha.a}
    \big(R(x a_0+ a_1)\big)^\mu = R'(x^\alpha a_0'+ a_1')
    \quad\mbox{for all}\quad x\in R .
\end{equation}
By (\ref{eq:component}), a point $p\in \bP(M)$ belongs to $C$ precisely when
there is at least one sequence $T\in\Seq(R)$ such that $p=R(x_0 a_{0} + x_1
a_{1})$ with $(x_0, x_1)=(1,0)\cdot E(T)$. It therefore remains to verify that
for all finite sequences $T\in\Seq(R)$ the coordinate rows $(x_0,x_1)$ and
$(x_0',x_1')$ from (\ref{eq:(x0,x1)(x0',x1')}) define points which correspond
under $\mu$. We proceed by induction on the length of $T$ which will be denoted
by $n$.
\par
For $n=0$ the sequence $T$ is empty and $E()$ is the identity matrix. Now
(\ref{eq:(x0,x1)(x0',x1')}) reads $(x_0, x_1) = (1,0)$, $(x_0', x_1') = (1,0)$,
and indeed $(R a_{0})^\mu = R'a_{0}'$ according to (\ref{eq:H.a}).
\par
For $n=1$ we have $T=(t_1)$ with $t_1\in R$. The assertion follows from
(\ref{eq:alpha.a}), since (\ref{eq:(x0,x1)(x0',x1')}) now takes the form $(x_0,
x_1) = (t_1,1)$, $(x_0', x_1') = (t_1^\alpha,1)$.
\par
Let $n\geq 2$ and suppose $T=(t_1,t_2,\ldots,t_n)\in\Seq(R)$. There is a unique
basis of $M$, say $(e_0,e_1)$, with $E(t_3,\ldots,t_n)$ being its matrix
w.~r.~t.\ $(a_0,a_1)$. We proceed analogously in $M'$ and obtain a basis
$(e_0',e_1')$ with $E(t_3^\alpha,\ldots,t_n^\alpha)$ being its matrix w.~r.~t.\
$(a_0',a_1')$. The following table displays for all $x\in R$ the coordinates of
certain elements of $M$ and $M'$:
\begin{equation}\label{eq:tabelle.e}
    \begin{array}{|c|l|c|l|}
    \hline
    \multicolumn{2}{|c|}{\mbox{Coordinates w.~r.~t.~} (a_0,a_1)}&
    \multicolumn{2}{|c|}{\mbox{Coordinates w.~r.~t.~} (a_0',a_1')}\\
      \hline
        e_0 & (1,0)\cdot E(t_3,\ldots,t_n) &
        e_0' & (1,0)\cdot E(t_3^\alpha,\ldots,t_n^\alpha)\\
        x e_0+e_1 & (1,0)\cdot E(x,t_3,\ldots,t_n) & x^\alpha e_0'+e_1' &
        (1,0)\cdot E(x^\alpha,t_3^\alpha,\ldots,t_n^\alpha)\\
    \hline
    \end{array}
\end{equation}
Those elements of $M$ and $M'$ which appear in the same row of table
(\ref{eq:tabelle.e}) generate corresponding points under $\mu$ due to the
induction hypothesis. In particular, as $x$ ranges in $\{0,1,-1\}$, we get
\begin{equation*}\label{eq:H.e-neu}
    (Re_0)^\mu= R'e_0',\quad
    (Re_1)^\mu= R'e_1', \quad
    \big(R(\pm e_0+e_1)\big)^\mu= R'(\pm e_0'+e_1').
\end{equation*}
Hence the bases $(e_0,e_1)$ and $(e_0',e_1')$ satisfy (\ref{eq:H.e}) so that
Lemma~\ref{lem:2} can be applied to them (without any notational changes). We
claim that $\alpha$, as defined via (\ref{eq:alpha.a}), coincides with the
Jordan homomorphism $\beta$ appearing in Lemma~\ref{lem:2}: Indeed, $\alpha$
satisfies the defining equation (\ref{eq:beta.e}) according to the second row
of table (\ref{eq:tabelle.e}) in conjunction with the induction hypothesis. We
now introduce bases $(f_0,f_1)$ of $M$ and $(f_0',f_1')$ of $M'$ as in
Lemma~\ref{lem:3}, but replace the arbitrary $t\in R$ from there by the given
$t_2\in R$. This gives a second table of coordinates:
\begin{equation}\label{eq:tabelle.f}
    \begin{array}{|c|l|c|l|}
    \hline
    \multicolumn{2}{|c|}{\mbox{Coordinates w.~r.~t.~} (a_0,a_1)}&
    \multicolumn{2}{|c|}{\mbox{Coordinates w.~r.~t.~} (a_0',a_1')}\\
      \hline
      f_0 & (1,0)\cdot E(t_2,\ldots,t_n) &f_0' & (1,0)\cdot E(t_2^\alpha,\ldots,t_n^\alpha)\\
    t_1 f_0+f_1 & (1,0)\cdot E(t_1,t_2,\ldots,t_n) &    t_1^\alpha f_0'+f_1' & (1,0)\cdot E(t_1^\alpha,t_2^\alpha,\ldots,t_n^\alpha)\\
    \hline
    \end{array}
\end{equation}
Since $\alpha=\beta$, we can read off from (\ref{eq:beta.f}) that
$\big(R(t_1f_0+f_1)\big)^\mu = R'(t_1^\alpha f_0'+f_1')$. Hence the coordinates
from the last row of table (\ref{eq:tabelle.f}) describe points which
correspond under $\mu$.
\end{proof}

\paragraph{Acknowledgements.} The author expresses his warmest thanks to Mark Pankov (Olsztyn) for the translation
of several articles from Russian to English.



\noindent
Hans Havlicek\\
Institut f\"{u}r Diskrete Mathematik und Geometrie\\
Technische Universit\"{a}t\\
Wiedner Hauptstra{\ss}e 8--10/104\\
A-1040 Wien\\
Austria\\
\texttt{havlicek@geometrie.tuwien.ac.at}
\end{document}